\newtheorem{theorem}{Theorem}
\newtheorem{lemma}[theorem]{Lemma}
\newtheorem{definition}[theorem]{Definition}
\newtheorem{conjecture}[theorem]{Conjecture}
\newcommand{\qed}{\hfill \ensuremath{\Box}}
\newenvironment{proof}[1][Proof]{\begin{trivlist}
\item[\hskip \labelsep {\bfseries #1}]}{\qed\end{trivlist}}
\newenvironment{proofof}[1][Proofof]{\begin{trivlist}
\item[\hskip \labelsep {\bfseries #1}]}{\qed\end{trivlist}}
\newcommand{\M}{\mathcal{M}}
\renewcommand{\S}{\mathcal{S}}
\newcommand{\E}{{\mathbb E}}
\newcommand{\RRM}{\mathsf{RRM}}
\newcommand{\URM}{\mathsf{URM}}
\newcommand{\wl}{\emph{wlog}}
\newcommand{\whp}{{\it whp}}
\definecolor{verd}{rgb}{0.1,0.5,0.1}
\title{Rainbow Matchings: existence and counting}%\thanksref{ALL}}
\author{Guillem Perarnau and Oriol Serra}%\thanksref{myemail}}
\begin{document}
% DO NOT REMOVE: Creates space for Elsevier logo, ScienceDirect logo
% and ENDM logo
%\begin{verbatim}\end{verbatim}\vspace{2.5cm}

%a 4\\ Universitat Polit\`ecnica de Catalunya\\
%Barcelona, Spain}

%\address{My Co-author's Department\\ My Co-author's University\\
%   My Co-author's City, My Co-author's Country}
%  \thanks[ALL]{This work has been partially supported by the Spanish Research Council
% under project MTM2008-06620-C03-01/MTM and the Catalan Research Council
% under project 2009SGR1387.
% The first author wants to thank the
% FPU grant from the Ministerio de Educaci\'on.}
%  \thanks[myemail]{Email:
%    \href{mailto:guillem.perarnau@ma4.upc.edu} {\texttt{\normalshape
%    guillem.perarnau@ma4.upc.edu}}} \thanks[coemail]{Email:
%    \href{mailto:oserra@ma4.upc.edu} {\texttt{\normalshape
%    oserra@ma4.upc.edu}}}

\maketitle

\begin{abstract} A perfect matching $M$ in an edge--colored complete bipartite graph $K_{n,n}$
 is rainbow if no pair of edges in $M$ have the same color. We obtain  asymptotic enumeration
 results for the number of rainbow matchings in terms of the maximum number of occurrences of a
color. We also consider two natural models of random edge--colored $K_{n,n}$ and show that, if the number of colors is at least $n$, then there is $\whp$ a random matching. This in particular shows that
almost every square matrix of order $n$ in which every entry appears at most $n$ times has a Latin transversal.
\vspace{0.5cm}

{\bf Keywords:} Rainbow Matchings,  Latin Transversals, Random Edge--colorings.
\end{abstract}

\section{Introduction}\label{intro}

A subgraph $H$ of an edge--colored graph $G$ is {\it rainbow}
if no color appear twice in $E(H)$. The study of rainbow subgraphs has
a large literature; see e.g.~\cite{ajmp2003,fk2008,KL2008,lsww2010,jw2007}. In this paper we deal
with {\it rainbow perfect matchings} of edge--colored complete bipartite graphs $K_{n,n}$.
These are  equivalent to  latin transversals in square matrices of order $n$, sets of $n$
pairwise distinct entries no two in the same row nor the same column. The following is a
longstanding conjecture by Ryser~\cite{r1967} on the existence of latin transversals in
latin squares:

\begin{conjecture}[Ryser]
 Every latin square of odd order admits a latin transversal.
\end{conjecture}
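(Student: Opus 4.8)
The plan is to place Ryser's conjecture within the circle of results on Latin transversals and to describe the attacks with the best prospects; since a complete proof is at present well out of reach, what follows is a genuine sketch of strategy rather than a routine verification.

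First I would try to make the parity hypothesis do real work. Oddness is essential: the addition table of $\mathbb{Z}_{2k}$ is a Latin square of even order with no transversal, so any argument must break down for even $n$. The classical algebraic approach encodes a transversal as a nonvanishing coefficient of a polynomial --- for instance applying the Combinatorial Nullstellensatz to a polynomial whose top-degree monomials enumerate transversals with a $\pm1$ sign, so that the signed transversal count appears as the relevant coefficient. For even $n$ this signed count can be $0$, and one hopes that for odd $n$ the sign pattern forces it to be nonzero. The main obstacle here is exactly the one that has blocked this line for decades: the signed transversal count is provably nonzero for odd $n$ only under extra assumptions on the square, and controlling its sign in general is open.

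A second route --- closest in spirit to the present paper --- is the semi-random method with absorption. One would build most of a transversal by a random greedy process, maintaining that the used colour classes and the surviving rows and columns stay well spread, and then complete it using a small absorbing structure reserved in advance. This is how the asymptotic statements (partial transversals of size $n-o(n)$, and more recently $n-1$ for large $n$) are proved. Promoting this to a full transversal for every odd $n$ needs an absorber capable of repairing the last few colours simultaneously, plus a separate treatment of small orders. The hard part will be the interface between the global parity constraint and the local gadgets: the absorption machinery is parity-blind, so the oddness must be injected at precisely the step that fixes the final colour, and no current argument does this.

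Finally I would keep in view the probabilistic reformulation exploited elsewhere in this paper: pick a uniformly random permutation matrix and bound the probability of a monochromatic pair by a lopsided local lemma ($\LLLL$) argument. This works whenever each colour occurs at most $cn$ times with $c<1$; a Latin square sits at $c=1$, the boundary case where the $\LLLL$ bound fails by a constant factor. So the realistic intermediate target I would actually pursue is to sharpen the $\LLLL$ estimate at the threshold --- via entropy compression or partial resampling --- to recover that lost constant, perhaps first under a mild discrepancy hypothesis on the square. Even such a conditional statement would be progress; the true obstruction is that a Latin square is simultaneously perfectly balanced and constrained by parity, a combination no single present technique reaches.
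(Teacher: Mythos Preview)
The statement you were asked to prove is Ryser's \emph{conjecture}, and the paper does not prove it: it is quoted in the introduction precisely as a longstanding open problem, and the paper's own results (asymptotic enumeration of rainbow matchings, existence $\whp$ in random colourings) are explicitly positioned as partial progress \emph{around} it, not a resolution. So there is no ``paper's own proof'' to compare your proposal against.

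Your write-up is honest about this --- you say outright that a complete proof is out of reach and that what follows is a sketch of strategy --- and as a survey of the main lines of attack (algebraic/Nullstellensatz, semi-random with absorption, pushing the $\LLLL$ to the $c=1$ threshold) it is accurate and well informed. But it is not a proof, and you should not present it as one. If the assignment was literally to supply a proof of the displayed statement, the correct answer is that none is known; if the assignment was to discuss the statement's status and the paper's relation to it, then your text is appropriate once reframed as commentary rather than as a proof proposal.
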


For even size, there are some latin squares that have no rainbow matchings, such as the
additive table of $\mathbb{Z}_{2n}$. Nevertheless, it was also conjectured (see e.g.~\cite{s1975}) that
every latin square of even size has a partial latin transversal of length $n-1$.

There are different approaches to address these conjectures.
For instance, Hatami and Shor~\cite{hs2008} proved that every latin square has a partial transversal
of size $n-O(\log{n}^2)$.
Another approach was given by Erd{\H{o}}s and Spencer~\cite{es1991} where they prove the
following result:

\begin{theorem}[Erd\H os, Spencer~\cite{es1991}]
	\label{thm:lopsi}
	Let $A$ be square matrix of order $n$. If  every entry in $A$ appears at most
$\tfrac{n-1}{4e}$ times, then $A$ has a latin transversal.
\end{theorem}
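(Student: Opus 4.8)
The natural tool here is the Lovász Local Lemma, in its lopsided form (which is tailor-made for this kind of permutation problem). The plan is to take $\pi$ uniformly at random from $S_n$ and consider the random transversal $T_\pi=\{(i,\pi(i)):i\in[n]\}$. For each unordered pair of distinct cells $c=\{(i,j),(k,l)\}$ with $i\ne k$, $j\ne l$ and $A[i][j]=A[k][l]$, let $B_c$ be the event that both cells lie in $T_\pi$, i.e.\ $\pi(i)=j$ and $\pi(k)=l$. A latin transversal exists precisely when some $\pi$ avoids all the $B_c$, so it suffices to prove $\Pr\big[\bigcap_c \overline{B_c}\big]>0$. Since $B_c$ pins down the values of $\pi$ on two coordinates, $\Pr[B_c]=\tfrac{(n-2)!}{n!}=\tfrac1{n(n-1)}=:p$.

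Next I would set up the dependency graph: declare $B_c$ and $B_{c'}$ adjacent when the cell-pairs $c$ and $c'$ share a row or a column. The crucial input — this is exactly the lopsided LLL for uniform random permutations — is that conditioning a given $B_c$ on the non-occurrence of \emph{any} subfamily of events indexed by cell-pairs disjoint (in both rows and columns) from $c$ cannot increase $\Pr[B_c]$. Granting that, all that remains is to bound the maximum degree $d$. Fix $c=\{(i,j),(k,l)\}$. An event adjacent to $B_c$ through row $i$ is obtained by choosing a cell $(i,j')$ in row $i$ and a second cell carrying the same entry; since each entry occurs at most $\tfrac{n-1}{4e}$ times, there are at most $n\big(\tfrac{n-1}{4e}-1\big)$ such events. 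The same bound holds for row $k$ and for columns $j$ and $l$, so $d\le 4n\big(\tfrac{n-1}{4e}-1\big)=\tfrac{n(n-1)}{e}-4n$.

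Then I would simply check the numerics: $e\,p\,(d+1)\le \tfrac{e}{n(n-1)}\big(\tfrac{n(n-1)}{e}-4n+1\big)<1$, so the (lopsided) Local Lemma gives $\Pr\big[\bigcap_c \overline{B_c}\big]>0$. Any $\pi$ in this event realises a transversal all of whose entries are distinct, i.e.\ a latin transversal of $A$, which is the claim.

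The routine parts are the probability estimate and the degree count; the real obstacle is justifying the lopsidependency statement for random permutations. Unlike the product-measure LLL, here $B_c$ is genuinely \emph{not} independent of its non-neighbours, so the classical LLL does not apply verbatim. One must instead prove directly that for a single $B_c$ and an arbitrary set $\mathcal{F}$ of events whose cell-pairs avoid all rows and columns used by $c$, $\Pr\big[B_c \,\big|\, \bigwedge_{B\in\mathcal{F}}\overline{B}\big]\le \Pr[B_c]$; this is a combinatorial comparison (a switching/injection argument on the permutations that avoid the forbidden patterns of $\mathcal{F}$) showing that additionally imposing $\pi(i)=j,\ \pi(k)=l$ can only shrink the surviving set proportionally. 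That lemma, once established, makes the rest of the proof the bookkeeping sketched above.
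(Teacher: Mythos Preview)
The paper does not give its own proof of this theorem: it is quoted from Erd\H{o}s and Spencer~\cite{es1991} as background, with only the one-line remark that ``in order to get this result the authors developed the Lopsided version of the Lov\'asz Local Lemma''. Your proposal is precisely that original argument --- uniform random $\pi\in S_n$, bad events $B_c$ for monochromatic $2$-matchings, the lopsidependency graph joining $c$ and $c'$ when they share a row or column, the degree bound $d\le 4n\big(\tfrac{n-1}{4e}-1\big)$, and the symmetric LLL check $e\,p\,(d+1)<1$ --- and it is correct. You also correctly single out the one nontrivial step, namely the negative-correlation inequality $\Pr\big(B_c\mid \bigcap_{c'\in\mathcal F}\overline{B_{c'}}\big)\le \Pr(B_c)$ for families $\mathcal F$ avoiding the rows and columns of $c$, which is exactly the switching/injection lemma of~\cite{es1991} (and is used again later in this paper, where the same graph $H$ is declared a negative dependency graph with a reference to~\cite{es1991}).
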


In order to get this result the authors  developed the Lopsided version of the Lov\'asz Local Lemma. The main idea of this version is to set a different dependency graph called
{\it lopsidependency graph}. In this graph edges may no longer represent dependencies and the hypothesis of the Local Lemma are replaced by a weaker assumption.

In this paper we address two  problems related to rainbow matchings in edge--colored $K_{n,n}$:
asymptotic enumeration and existence in random edge--colorings. Our edge--colorings are non necessarily proper and the results apply to  proper edge--colorings as well.

\vspace{0.3cm}

Previous results on enumeration of latin transversals have been obtained by McKay, McLeod and Wanless~\cite{mmw2006}, where the authors give 
upper and lower bounds on the maximum number of transversals that a latin square can have. However there is still a large gap between these bounds.
Here, we provide, under some hypothesis,  upper and lower bounds for the probability that a random
matching in an edge--colored $K_{n,n}$ is rainbow, that are asymptotically tight.

The bounds are obtained by techniques inspired by the framework devised by Lu and
Sz\'ekely~\cite{ls2009} to obtain asymptotic enumeration results
with the Local L\'ovasz Lemma.

For an edge--coloring of the complete bipartite graph $K_{n,n}$, we let $\M$ denote  the family of  pairs of non incident edges that have the same color.  Let $M$ be a random matching of  $K_{n,n}$. For each $(e,f)\in\M$, we denote by $A_{ef}$ the event  that the pair of edges  $e,f$ belongs to $M$.
This is the set of bad events in the sense that, if none of these events occur, then the matching $M$ is
rainbow.

Therefore, given a set of bad events $A_1,\dots,A_m$, we consider the problem of estimating the
probability of the event $\cap_{i=1}^m \overline{A_i}$. If  the bad events are mutually independent, then the number of bad events that are satisfied follows a Poisson distribution with parameter $\mu=\sum_{i=1}^m \Pr(A_i)$. Hence,
\begin{equation*}
\Pr( \cap_{i=1}^m \overline{A_j}) = e^{-\mu}.
\end{equation*}

It is natural to expect a similar behaviour if the dependencies among the events are rare. This is
known as the Poisson Paradigm (see e.g.~\cite{as2008}). Our objective is to show that
\begin{equation*}
\Pr( \cap_{i=1}^m \overline{A_j})  \rightarrow e^{-\mu} \quad(n\rightarrow \infty).
\end{equation*}
Let $X_M$ denote the indicator variable that a random perfect
matching $M$ is rainbow in a fixed edge--coloring of $K_{n,n}$. Let ${\mathcal M}$ denote the set of
pairs of independent edges that have the same color (bad events.) Our first result is the following:

\begin{theorem}
\label{thm:given}
Fix an edge--coloring  of $K_{n,n}$ such that no color appears more than $n/k$ times, where $k=k(n)$. Let $\mu=|\M|/n(n-1)$.

If $k\ge 12$ then there exist  constants $c_1<1<c_2$ depending only in $k$, such
that
$$
e^{-c_2\mu}\leq \Pr(X_M=1) \leq e^{-c_1\mu}.
$$
In particular, if $k=\omega(1)$ then
$$
\Pr(X_M=1) = e^{-(1+o(1))\mu}.
$$
Moreover, if $k=\omega(n^{1/2})$ then
$$
\Pr(X_M=1)=e^{-\mu}(1+o(1)).
$$
\end{theorem}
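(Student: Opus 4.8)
The plan is to read $\mu$ as the expected number of bad events and then squeeze $\Pr(X_M=1)=\Pr\big(\bigcap_{(e,f)\in\M}\overline{A_{ef}}\big)$ between a Janson-type upper bound and a Lov\'asz Local Lemma lower bound, both carried out in the space of uniform random perfect matchings of $K_{n,n}$ (equivalently, uniform random bijections $[n]\to[n]$). First I record the elementary identity that for fixed non-incident $e,f$ one has $\Pr(e,f\in M)=(n-2)!/n!=1/n(n-1)$, so that $\sum_{(e,f)\in\M}\Pr(A_{ef})=|\M|/n(n-1)=\mu$; hence $\mu=\E[\#\{\text{bad events that occur}\}]$ and $X_M=1$ iff no bad event occurs. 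All three displayed conclusions will follow from explicit bounds $e^{-c_2\mu}\le\Pr(X_M=1)\le e^{-c_1\mu}$ with $c_1=c_1(k)<1<c_2=c_2(k)$ and $c_1,c_2\to1$ as $k\to\infty$, once one also observes that when $k=\omega(n^{1/2})$ the quantity $\mu$ and all error terms are small enough that these bounds collapse to $e^{-\mu}(1+o(1))$.

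For the lower bound I would apply the lopsided Local Lemma in the matching space in the form given by Lu and Sz\'ekely (with the lopsidependency graph of Erd\H os--Spencer): an edge between $A_{ef}$ and $A_{e'f'}$ is needed only when $\{e,f,e',f'\}$ is not a partial matching. The crucial input is a bound on the maximum degree $D$: the at most four vertices of $e\cup f$ are each incident to at most $n$ edges, and each such edge lies in at most $n/k-1$ monochromatic non-incident pairs, so $D\le 4n^2/k$. Taking the uniform weight $x_{ef}=c_2/n(n-1)$, the hypothesis $\Pr(A_{ef})\le x_{ef}\prod_{(e',f')\sim(e,f)}(1-x_{e'f'})$ is implied by $1\le c_2\big(1-c_2/n(n-1)\big)^{4n^2/k}=c_2\exp\!\big(-(4+o(1))c_2/k\big)$, i.e.\ by $\ln c_2\ge(4+o(1))c_2/k$. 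Since $\min_{c>1}4c/\ln c=4e\approx10.87$, this is solvable by some constant $c_2=c_2(k)>1$ as soon as $k\ge12$, and the least admissible $c_2(k)$ satisfies $c_2(k)=1+O(1/k)\to1$. The Local Lemma then gives $\Pr(X_M=1)\ge\prod_{(e,f)\in\M}(1-x_{ef})=\big(1-c_2/n(n-1)\big)^{|\M|}=\exp\!\big(-c_2\mu(1+O(1/n^2))\big)$, which is at least $e^{-c_2\mu}$ after absorbing the $o(1)$ correction (note $\mu=O(n/k)$) into a slightly larger constant.

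For the upper bound I would use a Janson-type inequality valid for random matchings; the hypothesis it needs -- that an up-event $A_{ef}$ is positively correlated with any family of compatible up-events -- holds for random injections and is precisely the input behind the Lu--Sz\'ekely machinery (it can also be produced by a switching argument). It yields $\Pr(X_M=1)\le\exp(-\mu+\Delta)$, where $\Delta=\sum\Pr(A_{ef}\cap A_{e'f'})$ over pairs of distinct bad events whose underlying edge-sets overlap. The only nonzero contributions come from pairs sharing exactly one edge whose union is a matching of three edges, each contributing $(n-3)!/n!=1/n(n-1)(n-2)$; summing over colour classes $c$, with $m_c\le n/k$ the size of class $c$, the number of such pairs is at most $(n/k)\,|\M|(1+o(1))$, so $\Delta\le(2+o(1))\mu/k$. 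Hence $\Pr(X_M=1)\le e^{-(1-(2+o(1))/k)\mu}=e^{-c_1\mu}$ with $c_1=1-(2+o(1))/k$, which lies in $(0,1)$ for all $k\ge12$ and tends to $1$ as $k\to\infty$.

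It remains to assemble the regimes. For $k\ge12$ the two estimates already read $e^{-c_2\mu}\le\Pr(X_M=1)\le e^{-c_1\mu}$ with $c_1<1<c_2$, as required. If $k=\omega(1)$ then $c_1,c_2\to1$, giving $\Pr(X_M=1)=e^{-(1+o(1))\mu}$. If $k=\omega(n^{1/2})$ then $\mu=O(n/k)$ forces $(c_2-1)\mu=O(n/k^2)=o(1)$ and $\Delta=O(\mu/k)=O(n/k^2)=o(1)$; the lower bound becomes $e^{-\mu-o(1)}=e^{-\mu}(1-o(1))$ and the upper bound $e^{-\mu}e^{\Delta}=e^{-\mu}(1+o(1))$, whence $\Pr(X_M=1)=e^{-\mu}(1+o(1))$. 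I expect the main difficulty to be the honest justification of the Janson-type and lopsided Local Lemma estimates in the non-product space of random matchings -- identifying the correct (lopsi)dependency graph and the positive-correlation input -- together with the careful tracking of constants in the Local Lemma hypothesis that produces the threshold $k\ge12$ and the rate $c_2(k)=1+O(1/k)$; the degree and overlap counts themselves are routine.
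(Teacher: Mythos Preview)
Your lower bound is essentially the paper's Lemma~\ref{lem:lower}: the lopsided Local Lemma on the Erd\H{o}s--Spencer dependency graph with uniform weights $x_{ef}=c_2/n(n-1)$, the degree estimate $D\le 4n^2/k$, and the solvability threshold $k>4e$ leading to $c_2(k)=1+O(1/k)$ all match the paper's argument and its constant $c_2=1+16/k$.

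The upper bound, however, has a real gap. The Janson inequality $\Pr(\cap_i\overline{A_i})\le e^{-\mu+\Delta}$ rests, in its proof, on the estimate $\Pr(A_i\mid \cap_{j\in S}\overline{A_j})\ge\Pr(A_i)$ for $S$ contained in the non-neighbours of $i$ (in the product-measure setting this is an equality). In the uniform random matching space this inequality goes the \emph{wrong} way: compatible matching events are positively correlated (for a $4$-matching $\{e,f,e',f'\}$ one has $\Pr(A_{ef}\cap A_{e'f'})=1/n(n-1)(n-2)(n-3)>\Pr(A_{ef})\Pr(A_{e'f'})$), so conditioning on $\cap_{j\in S}\overline{A_j}$ \emph{decreases} $\Pr(A_i)$. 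That is precisely the Erd\H{o}s--Spencer lopsidependency inequality, the reverse of what Janson needs; your phrase ``positively correlated with any family of compatible up-events'' describes the LLLL input, not the Janson input. The Lu--Sz\'ekely machinery does not hand you $e^{-\mu+\Delta}$ either---the paper notes explicitly that their off-the-shelf theorem does not apply to this family of matchings. Your $\Delta$ only handles pairs sharing an edge; it does nothing to repair the shortfall in $\Pr(A_i\mid B)$ created by the non-neighbours.

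What the paper actually does is quantify that shortfall. Lemma~\ref{lem:positive} shows that the rainbow dependency graph is an $\varepsilon$-near-positive dependency graph with $\varepsilon=1-e^{-(2/k+32/k^2)}$: one conditions on the two edges of $A_i$, passes to the residual $K_{n-2,n-2}$, averages over the possible images of the two freed vertices (equation~\eqref{eq:induction}), and bounds each summand using the intermediate Local Lemma estimate~\eqref{eq:step2}. Feeding this $\varepsilon$ into Theorem~\ref{thm:ub} gives $\Pr(X_M=1)\le e^{-(1-\varepsilon)\mu}$ and hence $c_1=1-2/k-12/k^2$, which matches your claimed $1-(2+o(1))/k$ to leading order---but obtained through the $\varepsilon$-NDG verification, which is the substantive step missing from your outline.
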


In the proof of Theorem \ref{thm:given} we obtain  $c_1=1-2/k-12/k^2$ and $c_2=1+16/k$.
Note that the probability of having a rainbow matching only depends on the number of bad events that
the given coloring defines. Perhaps surprisingly, this probability does not depend on the structure of
the set of bad events in the coloring.

\vspace{0.3cm}

The results in Theorem \ref{thm:given} require the condition $k\ge 12$, which is one unit more than the  one given by Erd\H os and Spencer \cite{es1991} for the existence of rainbow matchings. This prompts us to analyze the existence of rainbow matchings in {\it random} edge--colorings of $K_{n,n}$ in the more general setting when $k\ge 1$ (we can not use less than $n$ colors.) For the existence of rainbow matchings in random edge--colorings of $K_{n,n}$ we restrict ourselves to colorings with a fixed number $s=kn$ of colors. We define two natural random models that fit with this condition.

In the Uniform random model, $\URM$, each edge gets one of the $s$ colors independently and
uniformly at random. In this model, every possible edge coloring with at most $s$ colors  appears
with the  the same probability. In the Regular random model, $\RRM$, we choose an edge coloring
uniformly at random among all the equitable edge colorings, where each color class has prescribed
size $\tfrac{n}{k}$. Although they have the same expected behaviour, both models are interesting.
A result analogous to the one in Theorem~\ref{thm:given} can be proven for these two models.

\begin{theorem}
\label{thm:random}
 Let $c$ be a random edge coloring of $K_{n,n}$ in the model $\URM$ with $s\ge n$ colors. Then,

$$
\Pr(X_M=1) = e^{-c(k)\mu}
$$
where $\mu\sim \tfrac{n^2}{2s}$ and $$c(k)=2k\left( 1 - (k-1)\log
\left(\frac{k}{k-1}\right)\right)$$

Let $c$ be a random edge-coloring of $K_{n,n}$ in the $\RRM$ model with $s\ge n$ colors. Then
$$
\Pr(X_M=1) = e^{-(c(k)+o(1))\mu}
$$
\end{theorem}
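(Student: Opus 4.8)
The plan is to reduce the random-coloring statement to the deterministic estimate of Theorem~\ref{thm:given} by conditioning on the coloring and controlling the random quantity $|\M|$. First I would treat the $\URM$ model. Fix a random matching $M$; by symmetry we may first expose $M$ and then expose the colors, or vice versa — the key observation is that for a \emph{fixed} matching $M$ the probability that $M$ is rainbow in $\URM$ is exactly the probability that $n$ i.i.d.\ uniform choices from $s$ colors are all distinct, namely $\prod_{i=0}^{n-1}(1-i/s) = \tfrac{s!}{(s-n)!\,s^n}$. Taking logarithms, $\log \Pr(X_M=1) = \sum_{i=0}^{n-1}\log(1-i/s)$, and since $s=kn$ with $k\ge 1$ this sum is $O(n)$ so a second-order Taylor expansion is valid; evaluating $\sum_{i=0}^{n-1}\bigl(-i/s - i^2/(2s^2) - \cdots\bigr)$ and replacing sums by integrals gives, after simplification, $-c(k)\cdot\tfrac{n^2}{2s}(1+o(1))$ where $c(k)=2k\bigl(1-(k-1)\log\tfrac{k}{k-1}\bigr)$. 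One checks separately that $\mu=\E|\M|/n(n-1)\sim n^2/(2s)$: indeed $\E|\M| = \binom{n}{?}\cdots$ — more precisely each of the $n(n-1)$ ordered pairs of independent edges collides with probability $1/s$, so $\E|\M| = \tfrac12 n(n-1)\cdot(\text{number of independent partners})/s \sim n^2(n-1)^2/(2s)$, and dividing by $n(n-1)$ gives $\mu\sim n^2/(2s)$ as claimed; hence $\Pr(X_M=1)=e^{-c(k)\mu}$ with the stated $\mu$. (Here I would be careful that in the $\URM$ computation the exact probability does not require any appeal to Theorem~\ref{thm:given} at all — it is a direct calculation — but it is reassuring to check consistency with the bounds there when $k$ is large.)

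For the $\RRM$ model the situation is genuinely different because the colors of distinct edges are negatively correlated, so the clean product formula fails. Here the strategy is: condition on the random equitable coloring $c$, apply Theorem~\ref{thm:given} to get $e^{-c_2(k)\mu_c}\le \Pr(X_M=1\mid c)\le e^{-c_1(k)\mu_c}$ where $\mu_c=|\M(c)|/n(n-1)$, and then show that $\mu_c$ is concentrated around its mean $\mu\sim n^2/(2s)$. Concentration of $|\M(c)|$ over the uniform choice of an equitable coloring is the technical heart: I would use a martingale/Azuma argument or a switching argument on color classes, exploiting that swapping the colors of two edges changes $|\M(c)|$ by $O(n)$, to get $|\M(c)| = (1+o(1))\E|\M(c)|$ \whp. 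But concentration of $\mu_c$ alone only yields $e^{-(c_1(k)+o(1))\mu}\le \Pr(X_M=1)\le e^{-(c_1(k)-o(1))\mu}$ with the non-tight constants $c_1(k)<1<c_2(k)$ from Theorem~\ref{thm:given}, \emph{not} the exact constant $c(k)$. To upgrade to $c(k)$ one needs a more refined input: the observation that, to leading order, an equitable random coloring locally looks like $\URM$ — the color multiset seen by any fixed matching $M$ is, up to lower-order corrections, $n$ samples without replacement that are asymptotically indistinguishable from $n$ i.i.d.\ samples when $s=kn\to\infty$ with the matching fixed. Concretely I would compute $\Pr(X_M=1\mid c)$ averaged over $c$ directly: $\E_c \Pr(X_M = 1\mid c) = \Pr_{c,M}(M\text{ rainbow})$, and for fixed $M$ this is the probability that a uniformly random equitable coloring assigns $n$ distinct colors to the edges of $M$, which is a ratio of multinomial coefficients that one expands asymptotically and shows agrees with the $\URM$ value to the relevant order, yielding the same exponent $-c(k)\mu$.

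The main obstacle, therefore, is the $\RRM$ case, and within it the matching between the \emph{first-moment} computation (which gives the exact constant $c(k)$) and the \emph{conditional} estimate from Theorem~\ref{thm:given} (which only gives constants bounded away from $c(k)$ unless $k=\omega(1)$, cf.\ the last line of Theorem~\ref{thm:given}). My plan to bridge this is to run the argument only in the regime where it is needed and to absorb the discrepancy into the $o(1)$ in the exponent: the statement for $\RRM$ only claims $e^{-(c(k)+o(1))\mu}$, so it suffices to show (i) $\E_c|\M(c)| = (1+o(1))\,\mu\, n(n-1)$ and the concentration $|\M(c)|=(1+o(1))\E_c|\M(c)|$ \whp, and (ii) the averaged rainbow probability equals $e^{-c(k)\mu(1+o(1))}$ by the direct multinomial expansion above, and then (iii) deduce the per-coloring statement \whp\ from (i)+(ii) together with the crude two-sided bound of Theorem~\ref{thm:given} used merely to rule out a non-negligible set of colorings where $\Pr(X_M=1\mid c)$ deviates. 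Steps (i) and (ii) are routine but lengthy asymptotic combinatorics with factorials and Stirling; step (iii) requires a careful ``reverse Markov'' argument since we are bounding a probability from below on average and need it pointwise \whp, for which I would note that $\Pr(X_M=1\mid c)\le 1$ deterministically so the average cannot be concentrated near $e^{-c(k)\mu}$ unless a $1-o(1)$ fraction of colorings have $\Pr(X_M=1\mid c)=e^{-(c(k)+o(1))\mu}$.
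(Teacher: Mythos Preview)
Your treatment of the $\URM$ case is correct and is exactly what the paper does: fix $M$, observe that the $n$ colors on its edges are i.i.d.\ uniform on $s$ symbols, write $\Pr(X_M=1)=\prod_{i=0}^{n-1}(1-i/s)$, and extract the exponent by approximating the log-sum by an integral.

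The gap is in your $\RRM$ plan. You assert that ``the clean product formula fails'' and then build an elaborate machine: condition on $c$, invoke Theorem~\ref{thm:given}, prove concentration of $|\M(c)|$ by martingales or switchings, and finish with a reverse-Markov argument. None of this is needed, and the reason is that your premise is false. In the $\RRM$ model an equitable coloring is a uniformly random partition of the $n^2$ edges into $s$ blocks of size $n^2/s$; equivalently a uniformly random bijection from edges to $n^2$ ``slots'' grouped into $s$ color-blocks. For a fixed matching $M$, the probability that its $n$ edges land in distinct blocks is computed sequentially just as cleanly as in $\URM$:
\[
\Pr(X_M=1)=\prod_{i=0}^{n-1}\frac{n^2-i\,n^2/s}{\,n^2-i\,}=\prod_{i=0}^{n-1}\left(1-\frac{i(n^2-s)}{s(n^2-i)}\right),
\]
and the same log-sum-to-integral analysis gives the exponent $-c(k)\mu(1+o(1))$. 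This is the paper's entire proof for $\RRM$. You actually brush against this in your step~(ii) (``a ratio of multinomial coefficients that one expands asymptotically''), but you bury it inside a three-step program whose other two steps are doing nothing.

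A second, related issue: you seem to read the $\RRM$ statement as a \emph{per-coloring} claim (``deduce the per-coloring statement $\whp$''), which is why you reach for concentration and reverse Markov. The paper proves, and the statement is naturally read as, the \emph{averaged} probability over the random coloring $c$ with $M$ fixed; for that, the product formula above is already the answer and no conditioning on $c$ or appeal to Theorem~\ref{thm:given} is required.
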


%The constant $C$ appears because the expected number of bad events is larger in the $\URM$ than in
%the $\RRM$, due to the variance of the color class sizes.
Observe that both models lead to similar results. In particular, if $k=\omega(1)$ 
$$
\Pr(X_M=1) = e^{-(1+o(1))\mu}
$$

The $\RRM$ behaves as expected since, as we have observed, just the number of bad events is
relevant, and in this case it is approximately $\tfrac{n^3}{2k}$.

Since the colorings are random, we have a stronger concentration of the rainbow matching
probability than in the case of fixed colorings. By using the random model $\URM$ we show that
\emph{with high probability} ($\whp$, meaning with probability tending to one as $n\rightarrow
\infty$), for any constant $k\ge 1$, every random coloring has a rainbow matching.

\begin{theorem}
	\label{thm:whp}
Every random edge--coloring of $K_{n,n}$ in the $\URM$ with $s\ge n$ colors  has $\whp$ a rainbow matching.
\end{theorem}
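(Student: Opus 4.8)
The plan is to run the second moment method on $N=N(c)$, the number of rainbow perfect matchings of $K_{n,n}$ under a coloring $c$ drawn from the $\URM$ with $s=kn\ge n$ colors ($k\ge 1$ fixed). Since $N\ge 0$, the standard inequality $\Pr(N=0)\le \mathrm{Var}(N)/\E[N]^2=\E[N^2]/\E[N]^2-1$ shows it suffices to prove $\E[N^2]=(1+o(1))\,\E[N]^2$ as $n\to\infty$.

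For the first moment, the $n$ colors on the edges of a fixed perfect matching $M$ are independent and uniform, so the probability that $M$ is rainbow is $p=\prod_{i=0}^{n-1}(1-i/s)$, the same for every $M$; hence $\E[N]=n!\,p$. For the second moment one needs $\Pr(M,M'\text{ both rainbow})$ for two perfect matchings sharing $j:=|M\cap M'|$ edges. The crucial observation is that the $2n-j$ edges of $M\cup M'$ are colored independently: conditioning on the colors of the $j$ shared edges (which are forced to be distinct) and writing $a=n-j$, the requirements on $M\setminus M'$ and on $M'\setminus M$ become independent events of equal probability $(s-j)_a/s^a$, and using $(s)_n=(s)_j\,(s-j)_a$ for falling factorials one gets
\begin{equation*}
\Pr(M,M'\text{ both rainbow})=\frac{(s)_j}{s^j}\left(\frac{(s-j)_a}{s^a}\right)^{\!2}=p^{2}\,\rho(j),\qquad\text{where }\ \rho(j):=\prod_{i=0}^{j-1}\frac{1}{1-i/s}=\frac{s^{j}}{(s)_j}.
\end{equation*}
This depends on $M,M'$ only through $j$. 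Summing over ordered pairs of perfect matchings and recalling that, for two independent uniform random perfect matchings of $K_{n,n}$, the overlap $|M\cap M'|$ has the law of the number of fixed points of a uniform random permutation of $[n]$ (so $\Pr(|M\cap M'|=j)\le 1/j!$, and this count is asymptotically $\mathrm{Poisson}(1)$), we obtain
\begin{equation*}
\frac{\E[N^2]}{\E[N]^2}=\E_{M,M'}\!\bigl[\rho(|M\cap M'|)\bigr]=\sum_{j\ge 0}\Pr(|M\cap M'|=j)\,\rho(j).
\end{equation*}

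It remains to show this sum is $1+o(1)$, which I would do by splitting it at $j_1=\lfloor\log n\rfloor$ and $j_0=\lceil 2n/\log n\rceil$ and using two elementary facts: $\rho$ is increasing with $\rho(n)=s^n/(s)_n\le n^n/n!\le e^{n}$ for every $k\ge 1$, and $\rho(j)\le\exp(j^2/(kn))$ for $j\le kn/2$. For $j<j_1$ we have $\rho(j)\le\exp((\log n)^2/(kn))=1+o(1)$ uniformly, and $\Pr(|M\cap M'|<j_1)=1-o(1)$ by the Poisson tail, so this block contributes $1+o(1)$. For $j_1\le j<j_0$ one checks (for $n$ large) that $j\mapsto\rho(j)/j!$ is decreasing there, since its consecutive ratio is $s/\big((s-j)(j+1)\big)\le 2/(j+1)<1$; hence the block is at most $j_0\cdot\rho(j_1)/j_1!=O(n/(\log n)!)=o(1)$. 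For $j\ge j_0$ the block is at most $\rho(n)\,\Pr(|M\cap M'|\ge j_0)\le e^{n}\cdot 2/j_0!$, which is $o(1)$ because $j_0!=e^{(2-o(1))n}\gg e^{n}$. Adding the three blocks gives $\E[N^2]/\E[N]^2\to 1$, completing the proof.

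The step I expect to be the main obstacle is the middle block $j_1\le j<j_0$: there $\rho(j)$ already grows superpolynomially in $n$ while $\Pr(|M\cap M'|=j)$ is only known to decay like $1/j!$, so one has to combine the monotonicity of $\rho(j)/j!$ with thresholds tuned against the worst case $k=1$ (where $\rho(n)$ is as large as $e^{(1+o(1))n}$) to keep (number of surviving terms) $\times$ (largest term) negligible. The first- and second-moment identities and the Poisson-tail bounds are routine.
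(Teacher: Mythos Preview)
Your proof is correct and follows essentially the same second-moment approach as the paper: both compute $\Pr(M,M'\text{ both rainbow})$ in terms of $j=|M\cap M'|$, reduce $\E[N^2]/\E[N]^2$ to a sum over $j$ weighted by the fixed-point (derangement) distribution of a uniform permutation, and show this sum tends to $1$. Your three-block splitting makes the tail control more explicit than the paper's somewhat informal $f(s)\to e$ step, but the underlying argument is the same.
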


To prove the Theorem~\ref{thm:whp} we use the second moment method on the random variable that
counts the number of rainbow matchings in the $\URM$ model. Observe that the same result can be
proven using the same idea for the $\RRM$ model.
%a modified version of Janson inequality. Janson inequality
%is useful when the different events are positively correlated and dependencies are rare among them.
%Unfortunately, among all the possible $n!$ matchings  there are a lot of dependencies.
%Nevertheless, most of them are weak dependencies. We give a version of Janson inequality adapted to
%this situation.
\vspace{0.3cm}

 The paper is organized as follows. In Section~\ref{sec:asymp} we provide a proof for
Theorem~\ref{thm:given}. The random coloring models are defined in Section~\ref{sec:random}, where
we also prove Theorem~\ref{thm:random}. In the Subsection~\ref{ssc:whp} we display a prove for
Theorem~\ref{thm:whp}. Finally on Section~\ref{sec:open} we discuss about open problems about
rainbow matchings that arise from the paper.

%%%%%%%%%%%%%%%%%%%%%%%%%%%%%%%%%%%%%%%%%%%%%%%%%%%%%%%%%%%%%%%%%%%%%%%%%%%%%%%%%
\section{Asymptotic enumeration}\label{sec:asymp}
%%%%%%%%%%%%%%%%%%%%%%%%%%%%%%%%%%%%%%%%%%%%%%%%%%%%%%%%%%%%%%%%%%%%%%%%%%%%%%%%%

In this section we  prove Theorem~\ref{thm:given}. When $k=\omega (1)$ for $n\to \infty$, it
gives an asymptotically tight estimation of the probability that a random matching is rainbow. 
For constant $k$ the theorem provides exponential upper and lower bounds for this
probability.

\subsection{Lower bound}

One of the standard tools to give a lower bound for $\Pr( \cap_{i=1}^m \overline{A_j})$ is the
Local Lemma. In particular, as it is shown in~\cite{es1991}, it is convenient in our current
setting to use the Lopsided version of it.

Given a set of events $A_1,\dots,A_m$, a graph $H$ with vertex set $V(H) = \{ 1,\ldots ,m\}$ is
a \emph{lopsidependency graph}  for the events if, for each $i$ and each subset  $S\subseteq
\{j \mid ij\not\in E(H),\, j\neq i\}$, we have
$$
\Pr(A_i\mid \cap_{j\in S} \overline{A_j})\leq \Pr(A_i).
$$
Following Lu and Szekely  \cite{ls2009}, we adopt the more explanatory term {\it negative
dependency graph}  for this notion. We next recall the statement of the L\'ovasz Local Lemma we will
use. It includes an intermediate step, that appears in its proof, which will also be used later on.

%\sug{
\begin{lemma}[LLLL]\label{lem:LLLL} Let $\{A_1,\ldots ,A_m\}$ be events and let $H=(V,E)$ be a graph
on $\{1,\ldots ,m\}$ such that, for each $i$ and each $S\subseteq
\{j \mid ij\not\in E(H),\, j\neq i\}$,
$$
\Pr (A_i|\cap_{j\in S}\overline{A_j})\le P(A_i).
$$

Let $x_1,\ldots ,x_m\in (0,1)$. If, for each $i$,
\begin{equation}
\label{eq:hyp}
 \Pr (A_i)\le x_i\prod_{ij\in E(H)} (1-x_j),
\end{equation}
then, for each $T\subset [m]$ we have
\begin{equation}\label{eq:step}
\Pr (A_i|\cap_{j\in T} \overline{A_j})\le x_i.
\end{equation}
In particular, for each $S\subset [m]$ disjoint from $T$ we have
\begin{equation}\label{eq:step2}
\Pr (\cap_{i\in S}\overline{A_i}|\cap_{j\in T} \overline{A_j})\ge \prod_{i\in S}(1-x_i),
\end{equation}
and
\begin{equation}\label{eq:lll}
\Pr (\cap_{j\in [m]} \overline{A_j})\ge \prod_{j\in [m]}(1-x_j).
\end{equation}
\end{lemma}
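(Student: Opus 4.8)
The plan is to prove the inequality \eqref{eq:step}, $\Pr(A_i \mid \cap_{j\in T}\overline{A_j}) \le x_i$, by induction on $|T|$, which is the standard inductive proof of the Lov\'asz Local Lemma adapted to the negative-dependency (lopsided) setting. The base case $T=\emptyset$ follows immediately from hypothesis \eqref{eq:hyp}, since $\Pr(A_i) \le x_i\prod_{ij\in E(H)}(1-x_j) \le x_i$. For the inductive step, fix $i$ and $T$ with $T\not\ni i$, and split $T = T_1 \cup T_2$ where $T_1 = \{j\in T : ij\in E(H)\}$ are the neighbours of $i$ in $H$ and $T_2 = T\setminus T_1$ are the non-neighbours. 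If $T_1=\emptyset$ then $T=T_2$ consists entirely of non-neighbours of $i$, so the negative-dependency hypothesis gives $\Pr(A_i \mid \cap_{j\in T}\overline{A_j}) \le \Pr(A_i) \le x_i$ directly. Otherwise, write
$$
\Pr(A_i \mid \cap_{j\in T}\overline{A_j}) = \frac{\Pr(A_i \cap \bigcap_{j\in T_1}\overline{A_j} \mid \cap_{j\in T_2}\overline{A_j})}{\Pr(\bigcap_{j\in T_1}\overline{A_j} \mid \cap_{j\in T_2}\overline{A_j})}.
$$
For the numerator, bound $\Pr(A_i \cap \cdots \mid \cdots) \le \Pr(A_i \mid \cap_{j\in T_2}\overline{A_j}) \le \Pr(A_i)$, again using that all of $T_2$ consists of non-neighbours of $i$. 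For the denominator, enumerate $T_1 = \{j_1,\dots,j_r\}$ and expand as a telescoping product
$$
\Pr\Bigl(\bigcap_{\ell=1}^r \overline{A_{j_\ell}} \,\Big|\, \cap_{j\in T_2}\overline{A_j}\Bigr) = \prod_{\ell=1}^r \Pr\Bigl(\overline{A_{j_\ell}} \,\Big|\, \overline{A_{j_1}}\cap\cdots\cap\overline{A_{j_{\ell-1}}}\cap\bigcap_{j\in T_2}\overline{A_j}\Bigr),
$$
and apply the induction hypothesis to each factor: the conditioning set $\{j_1,\dots,j_{\ell-1}\}\cup T_2$ has size at most $|T|-1$ (it omits $j_\ell,\dots,j_r$, and $r\ge 1$), so $\Pr(A_{j_\ell}\mid\cdots) \le x_{j_\ell}$, whence each factor is $\ge 1-x_{j_\ell}$. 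This yields $\Pr(A_i\mid\cap_{j\in T}\overline{A_j}) \le \Pr(A_i)/\prod_{\ell=1}^r(1-x_{j_\ell}) \le \Pr(A_i)/\prod_{ij\in E(H)}(1-x_j) \le x_i$ by \eqref{eq:hyp}, completing the induction.

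Given \eqref{eq:step}, the remaining statements are routine. For \eqref{eq:step2}, enumerate $S = \{i_1,\dots,i_t\}$ disjoint from $T$ and telescope:
$$
\Pr\Bigl(\bigcap_{\ell=1}^t \overline{A_{i_\ell}} \,\Big|\, \cap_{j\in T}\overline{A_j}\Bigr) = \prod_{\ell=1}^t \Pr\Bigl(\overline{A_{i_\ell}} \,\Big|\, \overline{A_{i_1}}\cap\cdots\cap\overline{A_{i_{\ell-1}}}\cap\bigcap_{j\in T}\overline{A_j}\Bigr) \ge \prod_{\ell=1}^t (1-x_{i_\ell}),
$$
where each factor is bounded using \eqref{eq:step} with conditioning set $\{i_1,\dots,i_{\ell-1}\}\cup T$, which does not contain $i_\ell$. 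Finally \eqref{eq:lll} is the special case $S=[m]$, $T=\emptyset$ of \eqref{eq:step2}.

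The only delicate point is bookkeeping in the induction: one must check that every conditioning set arising in the telescoped products is strictly smaller than $T$ (so the induction hypothesis applies) and that the event whose conditional probability is being bounded is never in its own conditioning set. Both hold because we always peel off at least one neighbour $j_\ell\in T_1$ and the indices $i, i_\ell$ are excluded by the disjointness assumptions. There is no real analytic obstacle here — the content of the lemma is entirely combinatorial/probabilistic bookkeeping, and the lopsided hypothesis is used in exactly two places: to drop the conditioning on non-neighbours in the numerator estimate, and (via the base case) to absorb the product $\prod(1-x_j)$.
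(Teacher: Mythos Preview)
Your proof is correct and is exactly the standard inductive argument for the lopsided Local Lemma. The paper does not actually supply a proof of this lemma --- it is stated as a recalled result, with the remark that inequality \eqref{eq:step} ``appears in its proof'' --- and the argument you have written out is precisely the canonical one being alluded to.
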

%}

Recall that ${\mathcal M}$ denotes  the family of  pairs of independent edges that have the
same color and, for each such pair $\{e,f\}\in {\mathcal M}$, we denote by $A_{e,f}$ the event that
the pair belongs to a perfect random matching $M$. We identify $\M$ with this set of events. We
consider the following dependency graph:

\begin{definition}\label{def:h} The rainbow dependency graph $H$ has  the family $\M$ as vertex
set. Two elements in $\M$ are adjacent in $H$ whenever the corresponding pairs of edges share some
end vertex in $K_{n,n}$.
\end{definition}

It is shown in Erd\H os and Spencer~\cite{es1991} that the graph $H$ defined above  is a
negative dependency graph. The following lower bound can be obtained in a similar way to Lu and
Szekely \cite[Lemma 2]{ls2009}. Recall that we consider edge--colorings of $K_{n,n}$ in which each
color appears at most $n/k$ times.

\begin{lemma}\label{lem:lower} With the above notations, if $k\ge 12$ then
$$
\Pr (\cap_{\{e,f\}\in\M}\overline{A_{e,f}})\ge e^{-(1+16/k)\mu},
$$
where $\mu=\sum_{\{e,f\}\in \M}\Pr(A_{e,f})$.

In particular, if $k=k(n)=\omega (\sqrt{n})$, then
$$
\Pr (\cap_{\{e,f\}\in\M}\overline{A_{e,f}})\ge (1+o(1))e^{-\mu}.
$$
\end{lemma}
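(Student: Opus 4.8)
The plan is to apply Lemma~\ref{lem:LLLL} (LLLL) with the negative dependency graph $H$ from Definition~\ref{def:h} and a uniform choice $x_{e,f}=x$ for all $\{e,f\}\in\M$, then optimize $x$ and translate the bound $\prod (1-x_{e,f})$ into an exponential in $\mu$. First I would recall the two quantities that drive the computation: the probability of a single bad event, $\Pr(A_{e,f})=\frac{1}{n(n-1)}$ (since a random perfect matching contains a fixed pair of independent edges with this probability), and the maximum degree $\Delta$ of $H$. For the latter, I would count: a pair $\{e,f\}\in\M$ has four endpoints in $K_{n,n}$; any other pair in $\M$ adjacent to it must use one of these four vertices as an endpoint, and at such a vertex there are $n-1$ incident edges, each of which is the same color as at most $n/k-1$ other edges. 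This gives $\Delta \le 4(n-1)(n/k-1) < \frac{4n^2}{k}$, and hence $\Delta \cdot \Pr(A_{e,f}) < \frac{4}{k} =: d$.

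Next I would verify hypothesis~\eqref{eq:hyp}. With $x_{e,f}=x$ it suffices that $\Pr(A_{e,f}) \le x(1-x)^{\Delta}$ for all $\{e,f\}$, i.e. $\frac{1}{n(n-1)} \le x(1-x)^{\Delta}$. Choosing $x = \frac{c}{n(n-1)}$ for a suitable constant $c>1$ depending on $k$, the requirement becomes $1 \le c(1-x)^{\Delta}$; since $\Delta x < d = 4/k \le 1/3$ (using $k\ge 12$), we have $(1-x)^{\Delta} \ge e^{-\Delta x/(1-x)} \ge e^{-d/(1-d)}$, so it is enough to take $c \ge e^{d/(1-d)}$. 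One checks that for $k\ge 12$ one may take, say, $c = 1 + 16/k$ (here $e^{d/(1-d)} = e^{(4/k)/(1-4/k)} \le 1+16/k$ for $k\ge 12$ — the routine inequality to confirm). Then~\eqref{eq:lll} gives
$$
\Pr\Big(\bigcap_{\{e,f\}\in\M}\overline{A_{e,f}}\Big) \ge \prod_{\{e,f\}\in\M}(1-x) = (1-x)^{|\M|}.
$$

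Finally I would convert the right-hand side to the claimed exponential form. Since $x = c/(n(n-1))$ is $o(1)$, we have $(1-x)^{|\M|} = \exp(|\M|\log(1-x)) \ge \exp(-|\M| x/(1-x))$; and $|\M| x = |\M|\cdot c/(n(n-1)) = c\mu$ by definition of $\mu$. Because $x \to 0$, the factor $1/(1-x) = 1+o(1)$ gets absorbed (for constant $k$ one can instead bump the constant slightly, still keeping it below $1+16/k$ with a touch of slack in the choice of $c$ above), yielding $\Pr(\bigcap \overline{A_{e,f}}) \ge e^{-(1+16/k)\mu}$. For the ``in particular'' statement, when $k = \omega(\sqrt n)$ we have $d = 4/k = o(n^{-1/2})$, so $c = 1+o(1)$ suffices in~\eqref{eq:hyp} and the exponent becomes $(1+o(1))\mu$, giving $\Pr(\bigcap\overline{A_{e,f}}) \ge (1+o(1))e^{-\mu}$.

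The only genuinely delicate point is the bookkeeping in the degree bound and the verification that the explicit constant $c = 1+16/k$ simultaneously satisfies $c \ge e^{d/(1-d)}$ and gives a clean final exponent; everything else is a direct instantiation of LLLL. I would double-check the degree count does not double-count pairs and that the ``$-1$''s are handled so the bound $\Delta < 4n^2/k$ (rather than something slightly larger) really holds for all $n$ in the relevant range.
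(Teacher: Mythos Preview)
Your plan is correct and follows essentially the same route as the paper: apply LLLL on the rainbow dependency graph $H$ with a uniform choice $x_i\approx (1+O(1/k))p$, using the degree estimate $\Delta\cdot p<4/k$, and then convert $\prod(1-x_i)$ into $e^{-(1+16/k)\mu}$. The one step you should make explicit in the ``in particular'' clause is that going from exponent $(1+O(1/k))\mu$ to $(1+o(1))e^{-\mu}$ requires $\mu/k=O(n/k^2)=o(1)$, which follows from $\mu\le n/(2k)$ --- this is exactly how the paper closes the argument.
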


\begin{proof}  Set ${\cal M}=\{A_1,\ldots ,A_m\}$. The size of $\M$ depends on the
configuration of the colors in $E(K_{n,n})$. In the worst case all the colors appear repeated  in
exactly $n/k$ disjoint edges. Thus,
\begin{equation*}
|\M|\leq kn \binom{n/k}{2}\sim\frac{n^3}{2k}
\end{equation*}

Since we are taking a random perfect matching, $p=\Pr(A_{i})=\tfrac{1}{n(n-1)}$ for each $i$. Then
\begin{equation}
\label{eq:mu}
\mu=\frac{|\M|}{n(n-1)}= \frac{1}{2}\left(1+\frac{1}{n-1}\right)\left(\frac{n}{k}-1\right) \le
\frac{n}{2k}
\end{equation}
%There are at most $\tfrac{4n(n-1)}{k}$ events in ${\cal M}$ incident with a given $A_{i}$ in the negative dependency graph $H$, namely, pairs of monochromatic edges sharing at least one vertex with the pair associated to $A_i$.  This implies that,
%$$
% \sum_{ij\in E(H)} \Pr(A_{j})  \leq
%\frac{4n(n-1)}{k}\frac{1}{n(n-1)}=\frac{4}{k}.
%$$
Set $t=4/k$. Since $n\ge k\ge 12$ we have $t\le 1/3$ and $p\le 1/35$. It can be checked that, for $4/n<t<7/50$ and $0<p<1/35$, we have
 $$
 p e^{(1+4t)t}<1-e^{-(1+4t)p}.
$$
Choose $x_i$ in the interval $(p e^{(1+4t)t},1-e^{-(1+4t)p})$. For each $1\le i\le m$ we have
\begin{equation}\label{eq:xi}
\Pr (A_i)=p<x_ie^{-(1+4t)t}<x_i\prod_{ij\in E(H)}e^{-(1+4t)\Pr(A_j)}<x_i\prod_{ij\in E(H)}(1-x_j).
\end{equation}
Thus, by Lemma \ref{lem:LLLL},
$$
\Pr (\cap_{A_i\in {\M}} \overline{A_i})\ge \prod_{i=1}^m (1-x_i)\ge e^{-\left(1+16/k\right)\mu}.
$$
This proves the first part of the Lemma. In particular, since $\mu\le n/2k$,
$$
\Pr (\cap_{A_i\in {\M}} \overline{A_i})\ge  e^{-\mu}\left(1-\frac{16\mu}{k}\right)\ge e^{-\mu}\left(1-\frac{8n}{k^2}\right).
$$
so that, if $k=k(n)=\omega (\sqrt{n})$, then
$$
\Pr (\cap_{A_i\in {\M}} \overline{A_i})\ge  e^{-\mu}(1+o(1)).
$$

\end{proof}

%%%%%%%%%%%%%%%%%%%%%%%%%%%%%%%%%%%%%%%%%%%%%%%%%%%%%
%%%%%%%%%%%%%%%%             UPPER BOUND           %%%%%%%%%%%%%%%%%%%%%
%%%%%%%%%%%%%%%%%%%%%%%%%%%%%%%%%%%%%%%%%%%%%%%%%%%%%

\subsection{Upper bound}

Lu and Szekely~\cite{ls2009} propose a new enumeration tool using the Local Lemma.
Their objective is to find an upper bound for the non occurrence of rare events comparable with the
Janson inequality. In order to adapt the Local Lemma, they define a new type of parametrized
dependency graph: the \emph{$\varepsilon$-near dependency graph}.

Let $A_1,\dots,A_m$ a set of events. A graph $H$ with vertex set $\{A_1,\ldots ,A_m\}$ is an $\varepsilon$-near-positive dependency graph ($\varepsilon$-NDG) if,
\begin{enumerate}[i)]
 \item if $A_i\sim A_j$, then $\Pr(A_i\cap A_j)=0$.
 \item for any set $S\subseteq \{j:A_j\nsim A_i\}$ it holds $\Pr(A_i \mid \cap_{j\in S}
\overline{A_j})\geq (1-\varepsilon)\Pr(A_i)$.
\end{enumerate}

Condition $i)$ implies that only incompatible events can be connected. Condition $ii)$ says
that this set of non connected events  can not shrink the probability of $A_i$ too much.

\begin{theorem}[Lu and Szekely~\cite{ls2009}]
\label{thm:ub}
 Let $A_1 ,\dots, A_m$ be events with an $\varepsilon$--near--positive dependency graph $H$. Then we have,
\begin{equation*}
\Pr(\cap \overline{A_i})\leq \prod_{i} (1-(1-\varepsilon)\Pr(A_i)).
\end{equation*}
\end{theorem}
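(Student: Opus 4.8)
The plan is to imitate the inductive telescoping argument behind the Lovász Local Lemma, but with all inequalities reversed. Fix an arbitrary ordering of the events and expand
$$
\Pr\Big(\bigcap_{i=1}^m \overline{A_i}\Big)=\prod_{i=1}^m \Pr\Big(A_i\ \text{fails}\ \Big|\ \bigcap_{j<i}\overline{A_j}\Big)=\prod_{i=1}^m\Big(1-\Pr\big(A_i\ \big|\ \textstyle\bigcap_{j<i}\overline{A_j}\big)\Big).
$$
If some partial intersection $\bigcap_{j<i}\overline{A_j}$ has probability $0$ we are done immediately, since then the left-hand side vanishes while the claimed right-hand side is a product of numbers in $[0,1]$; so we may assume every conditioning below is legitimate. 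In view of the expansion, it then suffices to prove, for each $i$,
$$
\Pr\Big(A_i\ \Big|\ \bigcap_{j<i}\overline{A_j}\Big)\ \ge\ (1-\varepsilon)\,\Pr(A_i),
$$
because taking complements turns this into the factor $1-(1-\varepsilon)\Pr(A_i)$ and multiplying over $i$ yields the theorem.

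To establish the displayed inequality, split $\{j:j<i\}$ into the neighbours $T=\{j<i:A_j\sim A_i\}$ and the non-neighbours $S=\{j<i:A_j\nsim A_i\}$. By condition i) of the definition of an $\varepsilon$-NDG, $\Pr(A_i\cap A_j)=0$ for every $j\in T$, so $A_i\setminus\overline{A_j}$ is null for each such $j$ and hence $A_i\setminus\bigcap_{j\in T}\overline{A_j}=\bigcup_{j\in T}(A_i\cap A_j)$ is null as a finite union of null sets. Consequently the numerator is unaffected by the events indexed by $T$, namely $\Pr\big(A_i\cap\bigcap_{j<i}\overline{A_j}\big)=\Pr\big(A_i\cap\bigcap_{j\in S}\overline{A_j}\big)$, whereas in the denominator we may only decrease probability by dropping those events, $\Pr\big(\bigcap_{j<i}\overline{A_j}\big)\le\Pr\big(\bigcap_{j\in S}\overline{A_j}\big)$. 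Combining,
$$
\Pr\Big(A_i\ \Big|\ \bigcap_{j<i}\overline{A_j}\Big)=\frac{\Pr\big(A_i\cap\bigcap_{j\in S}\overline{A_j}\big)}{\Pr\big(\bigcap_{j<i}\overline{A_j}\big)}\ \ge\ \frac{\Pr\big(A_i\cap\bigcap_{j\in S}\overline{A_j}\big)}{\Pr\big(\bigcap_{j\in S}\overline{A_j}\big)}=\Pr\Big(A_i\ \Big|\ \bigcap_{j\in S}\overline{A_j}\Big),
$$
and since $S\subseteq\{j:A_j\nsim A_i\}$, condition ii) bounds the last quantity below by $(1-\varepsilon)\Pr(A_i)$, as required.

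I do not expect any genuine obstacle here: the argument is purely measure-theoretic bookkeeping. The only points needing care are the degenerate cases with vanishing denominators (disposed of at the outset) and the null-set identifications forced by condition i), which must be tracked carefully to justify replacing $\bigcap_{j<i}\overline{A_j}$ by $\bigcap_{j\in S}\overline{A_j}$ in the numerator. It is worth emphasising that this numerator identity, paired with the trivial monotonicity in the denominator, is precisely the upper-bound mirror of the Local Lemma step where one discards the non-neighbours of $A_i$, and it is the structural reason condition i) is imposed on the graph.
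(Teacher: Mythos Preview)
Your argument is correct. The telescoping via the chain rule, the handling of the null denominators, the removal of the neighbour events from the numerator using condition~i), the monotonicity step in the denominator, and the final appeal to condition~ii) are all valid and cleanly executed.

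Note, however, that the paper does \emph{not} supply its own proof of this statement: Theorem~\ref{thm:ub} is quoted from Lu and Sz\'ekely and used as a black box, so there is no in-paper proof to compare against. That said, your proof is exactly the standard one (and essentially the argument in the original Lu--Sz\'ekely paper): order the events, expand $\Pr(\cap\overline{A_i})$ as a product of conditional complements, and bound each conditional probability $\Pr(A_i\mid\cap_{j<i}\overline{A_j})$ from below by first absorbing the neighbouring $\overline{A_j}$ into $A_i$ via condition~i) and then invoking condition~ii) on the remaining non-neighbours. There is nothing to add or correct.
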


Observe that this upper bound  gives an exponential upper bound of the form
$e^{(1-\varepsilon)\mu}$.

Lu and Szekely~\cite{ls2009}  show also that an  $\varepsilon$--near--positive dependency
$H$ can be constructed using a family of matchings $\M$. Unfortunately the conditions of
\cite[Theorem 4]{ls2009} which would provide the upper bound in our case do not apply to our family
$\M$ of matchings. We give instead a direct proof for the upper bound which is inspired by their
approach. 

\begin{lemma}\label{lem:positive} The graph $H$ is an $\varepsilon$--near-positive dependency
graph with $\varepsilon=1- e^{-(2/k+32/k^2)}$.
\end{lemma}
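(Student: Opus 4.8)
The plan is to verify directly the two defining conditions of an $\varepsilon$-near-positive dependency graph for the graph $H$ of Definition~\ref{def:h}. Condition $i)$ is essentially immediate: if two pairs $\{e,f\}, \{e',f'\}\in\M$ are adjacent in $H$, then they share an endpoint in $K_{n,n}$, so two of the four edges $e,f,e',f'$ are incident; a perfect matching cannot contain two incident edges, hence $\Pr(A_{e,f}\cap A_{e',f'})=0$. So the entire content of the lemma lies in condition $ii)$: for any pair $\{e,f\}\in\M$ and any set $S$ of pairs none of which is adjacent to $\{e,f\}$ in $H$, we need
$$
\Pr\bigl(A_{e,f}\mid \textstyle\bigcap_{\{e',f'\}\in S}\overline{A_{e',f'}}\bigr)\ge (1-\varepsilon)\Pr(A_{e,f}),\qquad \varepsilon = 1-e^{-(2/k+32/k^2)}.
$$

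First I would rewrite the conditional probability in the useful form
$$
\Pr\bigl(A_{e,f}\mid \textstyle\bigcap_{S}\overline{A_{e',f'}}\bigr) = \Pr(A_{e,f})\cdot\frac{\Pr\bigl(\bigcap_{S}\overline{A_{e',f'}}\mid A_{e,f}\bigr)}{\Pr\bigl(\bigcap_{S}\overline{A_{e',f'}}\bigr)},
$$
so that it suffices to lower-bound the ratio of the two matching-probabilities by $e^{-(2/k+32/k^2)}$. Conditioning on $A_{e,f}$ means forcing $e,f$ into the matching; since no pair in $S$ shares an endpoint with $e$ or $f$, the conditional random matching is simply a uniform random perfect matching on the bipartite graph $K_{n-2,n-2}$ obtained by deleting the four endpoints of $e,f$. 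Thus the numerator is $\Pr(\bigcap_S\overline{A_{e',f'}})$ computed in $K_{n-2,n-2}$, and the denominator is the same event computed in $K_{n,n}$. The key step is to compare these two quantities. The natural route is an inclusion–exclusion / union-bound sandwich: the denominator is at least $1-\sum_S\Pr(A_{e',f'}) = 1-\sum_S\frac{1}{n(n-1)}$, and the numerator is at most $1$ but more usefully we want a lower bound on the numerator and an upper bound on the denominator — actually we want numerator/denominator bounded below, so we need the numerator not much smaller than the denominator. Here I would instead invoke the intermediate conclusion \eqref{eq:step2} of Lemma~\ref{lem:LLLL} (already proved in the lower-bound section with $x_i$ close to $p=\tfrac1{n(n-1)}$): applied inside $K_{n-2,n-2}$ it gives $\Pr(\bigcap_S\overline{A_{e',f'}}\mid A_{e,f})\ge \prod_{S}(1-x_i)$, while the denominator is trivially $\le 1$. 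Then the ratio is at least $\prod_{S}(1-x_i)\ge e^{-(1+4t)\sum_S p'}$ where $p' = \tfrac1{(n-2)(n-3)}$ is the single-pair probability in $K_{n-2,n-2}$ and $t=4/k$.

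It then remains to bound $\sum_{\{e',f'\}\in S} p'$. Since $S$ consists of monochromatic pairs none of which touches the four fixed vertices, $|S|$ is at most the number of monochromatic independent pairs in $K_{n-2,n-2}$, which in the worst case (each of the $\le kn$ colors appearing in $\le n/k$ disjoint edges) is at most $kn\binom{n/k}{2}\sim \tfrac{n^3}{2k}$, exactly as in the proof of Lemma~\ref{lem:lower}; dividing by $(n-2)(n-3)$ gives $\sum_S p'\le \tfrac{n}{2k}(1+o(1))$, and more precisely one can carry a clean bound of the shape $\tfrac1{2}(\tfrac nk-1)(1+O(1/n))$. Feeding this into $(1+4t)\sum_S p'$ with $t=4/k$ produces an exponent of the form $(1+16/k)\cdot\tfrac{1}{2k}\cdot$(something like $n$), but one must be careful: what we actually need is $\sum_S p' - \mu$ type quantity, i.e.\ the \emph{excess} of bad events visible after deleting four vertices versus the full count $\mu$, together with the slack $4t$. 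The honest accounting is: the numerator/denominator ratio is $\ge e^{-(1+4t)(\sum_S p' )}\cdot e^{+(\text{denominator correction})}$; matching this against the claimed $e^{-(2/k+32/k^2)}$ forces us to show $(1+4t)\sum_S p' - (\text{lower-order from denominator})\le 2/k+32/k^2$ after the $\mu$-normalization, i.e.\ per unit of $\mu$. Writing $2/k+32/k^2 = 2t/4 \cdot\,\cdots$ — more transparently, $2/k+32/k^2 = (1+16/k)\cdot(2/k) - $ lower order, and recalling $\mu\le n/(2k)$, one checks the inequality reduces to the elementary estimate that was flagged in Lemma~\ref{lem:lower}, namely $x_i<1-e^{-(1+4t)p}$ with $t=4/k$, now applied in the slightly smaller graph. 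The main obstacle, and the step I expect to need the most care, is precisely this bookkeeping: controlling the difference between the number of bad events in $K_{n,n}$ and in $K_{n-2,n-2}$ and showing the resulting loss, once normalized by $\mu$, is at most $2/k+32/k^2$ rather than a constant independent of $k$; this is where the two fixed edges $e,f$ and the factor $4t=16/k$ of slack interact, and getting the constants $2/k$ and $32/k^2$ exactly right (as opposed to $O(1/k)$) requires tracking the $\binom{n/k}{2}$ worst-case count precisely and not just asymptotically.
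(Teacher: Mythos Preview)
Your verification of condition $i)$ is fine, and the reformulation
\[
\Pr(A_{e,f}\mid B)=\Pr(A_{e,f})\cdot\frac{\Pr(B\mid A_{e,f})}{\Pr(B)}
\]
with $B=\bigcap_{S}\overline{A_{e',f'}}$, together with the observation that $\Pr(B\mid A_{e,f})$ is $\Pr(B')$ computed for a uniform perfect matching in $K_{n-2,n-2}$, is exactly the right starting point and matches the paper. The gap is in how you compare $\Pr(B')$ with $\Pr(B)$.

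You lower-bound the numerator $\Pr(B')$ by the Local Lemma product $\prod_{S}(1-x_i)$ and upper-bound the denominator $\Pr(B)$ by $1$. But here $S$ is the full set of bad pairs surviving in $K_{n-2,n-2}$, so $|S|$ can be as large as $\Theta(n^3/k)$ and $\sum_{S}p'\le n/(2k)(1+o(1))$. That yields a ratio $\ge e^{-(1+16/k)\,\Theta(n/k)}$, an exponent growing linearly in $n$, whereas the target $1-\varepsilon=e^{-(2/k+32/k^2)}$ is a constant depending only on $k$. Your ``honest accounting'' paragraph recognises this discrepancy but the proposed fix---subtracting a ``denominator correction'' and then normalising ``per unit of $\mu$''---is not a valid step: the $\varepsilon$ in the definition of an $\varepsilon$-NDG is an absolute constant, not a rate per bad event, and the only non-circular upper bound you have on $\Pr(B)$ at this stage is $\Pr(B)\le 1$. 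Any sharper upper bound on $\Pr(B)$ (of the type $e^{-(1-o(1))\mu}$) would already require the near-positive dependency property you are trying to establish.

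The idea you are missing, and which the paper uses, is to expand $\Pr(B)$ itself inside $K_{n-2,n-2}$ rather than to bound it by $1$. Writing $C_{r,s}$ for the event that $a_{n-1}b_r$ and $a_nb_s$ are in the random matching, one has
\[
\Pr(B)=\sum_{r\neq s}\Pr(B\cap C_{r,s})=\frac{1}{n(n-1)}\sum_{r\neq s}\Pr(B'_{r,s}),
\]
where $B'_{r,s}$ is the event (in $K_{n-2,n-2}$) that none of the bad pairs in $S$ \emph{avoiding} $b_r,b_s$ occurs. Now each term $\Pr(B'_{r,s})$ differs from $\Pr(B')$ only through the bad pairs that \emph{touch} $b_r$ or $b_s$; there are at most $2n(n/k-1)\le 2n^2/k$ such pairs, and the intermediate inequality \eqref{eq:step2} of the Local Lemma gives $\Pr(B'_{r,s})\le \Pr(B')\prod_{j}(1-x'_j)^{-1}$ over this small set. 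The product has only $O(n^2/k)$ factors, each $1-x'_j\ge e^{-(1+16/k)p}$ with $p=1/n(n-1)$, so it contributes $e^{(2/k+32/k^2)}$, independent of $n$. Averaging over $r\neq s$ then gives $\Pr(B)\le \Pr(B')\,e^{(2/k+32/k^2)}$, i.e.\ $\Pr(B\mid A_{e,f})\ge e^{-(2/k+32/k^2)}\Pr(B)$, which is exactly condition $ii)$ with the stated $\varepsilon$. The crucial point is that the comparison set has size $\Theta(n^2/k)$, not $\Theta(n^3/k)$; your approach never isolates this smaller set.
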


\begin{proof} Set ${\cal M}=\{A_1,\ldots ,A_m\}$. The graph $H$ clearly satisfies condition i) in the definition of $\varepsilon$-NDG.
For condition ii) we want to show that, for each $i$ and each $T\subseteq
\{j \mid ij\not\in E(H),\, j\neq i\}$, we have the inequality
$$
\Pr (A_i|B)\ge (1-\epsilon)\Pr(A_i),
$$
where $B=\cap_{j\in T}\overline{A_j}$.
This is equivalent to show
$$
\Pr (B|A_i)\ge (1-\epsilon)\Pr(B).
$$
Let $\{a_1,\ldots ,a_n\}$ and $\{ b_1,\ldots ,b_n\}$ be the vertices of the two stable sets of
$K_{n,n}$. We may assume that $A_i$ consists of the two edges $a_{n-1}b_{n-1}, a_nb_n$. Then
$\{A_j:j\in T\}$ consists of a set of  $2$--matchings in $K_{n,n}-\{ a_{n-1},a_n,b_{n-1},b_n\}$.
This is the complete bipartite graph $K_{n',n'}$, $n'=n-2$, with an edge--coloring in which each
color appears at most $n'/k'$ times, where $k'= k(1+2/(n-2))$. Let us call $B'$ the event  $B$
viewed in $K_{n'n'}$ (dashes in notation indicate changing the probability space from random
matchings in $K_{n,n}$ to random matchings in $K_{n'n'}$), so that
\begin{equation}\label{eq:b'}
\Pr (B|A_i)=\Pr (B').
\end{equation}
Let $C_{r,s}$ denote the $2$--matching $a_{n-1}b_r, a_nb_s$, where $r\neq s$.  Define  $T_{r,s}\subset T$ in such a way that $\{A_j:j\in T_{r,s}\}$ are the $2$--matchings  in $\{A_j:j\in T\}$ which meet none of the two vertices $b_r, b_s$.
Set $B_{r,s}=\cap_{j\in T_{r,s}}\overline{{A_j}}$. Let us show that
\begin{equation}\label{eq:induction}
\Pr(B)=\frac{1}{n(n-1)}\sum_{r\neq s}\Pr (B_{r,s}'),
\end{equation}
where, as before, $B'_{r,s}$ denotes the event $B_{r,s}$ in the probability space of random matchings in $K_{n',n'}$.

We have
$$
\Pr(B)=\sum_{r\neq s}\Pr (B\cap C_{r,s})=\sum_{r\neq s}\Pr (B_{r,s}\cap C_{r,s}),
$$
Note that, since none of the matchings involved in $B$ meets vertices in $\{ a_{n-1},a_n,b_{n-1},b_n\}$, we have, for all $r,s$, $r\neq s$,
$$
\Pr (B_{r,s}|C_{r,s})=\Pr (B_{r,s}|C_{n-1,n}).
$$
Moreover, observe that $\Pr (B_{r,s}|C_{n-1,n})=\Pr (B'_{r,s})$. Therefore
$$
\Pr(B)=\sum_{r\neq s}\Pr (B_{r,s}| C_{r,s})\Pr (C_{r,s})=\frac{1}{n(n-1)}\sum_{r\neq s}\Pr (B_{r,s}|C_{n-1,n})= \frac{1}{n(n-1)}\sum_{r\neq s}\Pr (B_{r,s}'),
$$
giving equality \eqref{eq:induction}.

From inequality \eqref{eq:xi} we know that   $x'_j= 1- e^{-(1+16/k')p'}$ fulfills the hypothesis~\eqref{eq:hyp} of  the Local Lemma. We can now use the intermediate inequality \eqref{eq:step2} of the Lemma with $S=T\setminus T_{r,s}$ to obtain
\begin{equation}\label{eq:step2b}
\Pr (B_{r,s}')= \frac{\Pr (B')}{\Pr(\cap_{j\in S}\overline{A_j})}\le \Pr (B')\prod_{j\in S}(1-x_j')^{-1}.
\end{equation}
By combining \eqref{eq:b'} with \eqref{eq:induction} and \eqref{eq:step2b} we get

\begin{equation}\label{eq:epsilon}
\Pr(B|A_i)\ge \Pr (B)\prod_{j\in S}(1-x_j').
\end{equation}

Recall that $S=T\setminus T_{r,s}$ is  the set of  $2$-matchings in $\M'$ that are incident to
$b_r$ or $b_s$. The size of this set can be bounded independently from $r$ and $s$ by
$$
|S|\leq 2 n'\left(\frac{n'}{k'}-1\right) \leq 2\frac{n^2}{k}
$$
With our choice of    $x'_j= 1- e^{-(1+16/k')p'}\le 1-e^{-(1+16/k)p}$ (where   $p=1/n(n-1)$) we have
$$
\prod_{j\in S}(1-x_j')\ge  e^{-(1+16/k)p|S|}\ge e^{-(2/k+32/k^2)}.
$$
Therefore, by \eqref{eq:epsilon},
$$
 \varepsilon =   1- e^{-(2/k+32/k^2)},
$$
satisfies the conclusion of the Lemma.
\end{proof}

Now we are able to prove  Theorem~\ref{thm:given}.

\begin{proofof}[Proof of Theorem~\ref{thm:given}]
Set ${\cal M}=\{A_1,\ldots ,A_m\}$. By Lemma \ref{lem:positive}, the graph $H$ is an
$\varepsilon$--near--positive dependency graph with $\varepsilon =   1- e^{-(2/k+32/k^2)}$. It
follows from Theorem~\ref{thm:ub} that the probability of having a rainbow matching is upper bounded
by
$$
\Pr(\cap_{i\in [m]} \overline{A_i}) \leq \prod_{i\in [m]}
\left(1-(1-\varepsilon)\Pr (A_i)\right) \le e^{-(1-\epsilon)\mu}.
$$
By plugging in our value of $\varepsilon$ and by using $e^{-(2/k+32/k^2)}\ge 1-\frac{2}{k}-\frac{32}{k^2}$ we obtain
$$
\Pr(\cap_{i\in [m]} \overline{A_i}) \leq e^{-(1-2/k-12/k^2)\mu}.
$$
Combining this upper bound with the lower bound obtained in Lemma \ref{lem:lower} we obtain
$$
\exp \left\{-\left( 1+\frac{16}{k}\right) \mu\right\} \leq Pr(\cap \overline{A_i})\leq
\exp \left\{-\left( 1-\frac{2}{k}-\frac{12}{k^2}\right)\mu\right\}.
$$
This proves the first part of the Theorem.

In particular, since $\mu\le n/2k$, if $k=\omega (n^{1/2})$ we get
$$
\Pr(\cap_{i\in [m]} \overline{A_i}) \leq e^{-\mu}(1+o(1)),
$$
which matches the lower bound obtained in Lemma \ref{lem:lower}, thus proving the second part of the Theorem.
\end{proofof}

%%%%%%%%%%%%%%%%%%%%%%%%%%%%%%%%%%%%%%%%%%%%%%%%%%%%%%%%%%%%%%%%%%%%%%%%%%%%%
\section{Random colorings}
\label{sec:random}
%%%%%%%%%%%%%%%%%%%%%%%%%%%%%%%%%%%%%%%%%%%%%%%%%%%%%%%%%%%%%%%%%%%%%%%%%%%%%

In this section we will analyze the existence of rainbow matchings when the edge coloring of $K_{n,n}$ is given at random.

Recall that, in the uniform random model $\URM$, each edge of $K_{n,n}$ is given a color uniformly and
independently chosen from a set $C$ with $s$ colors, i.e. every possible coloring with at most $s$ colors appears with the same probability.

In the regular random model $\RRM$ a coloring is chosen
uniformly at random among all colorings of $E(K_{n,n})$ with equitable color classes of size
$n^2/s$. In order to construct a coloring in the $\RRM$ we use a complete bipartite graph
$H=(A,B)$, where $A$ contains $s$ blocks, each of size $n^2/s$, representing the colors and
$B$ is the set of edges of $K_{n,n}$. Every perfect matching in $H$ gives an equitable coloring of
$E(K_{n,n})$. Moreover, every equitable coloring of $E(K_{n,n})$  corresponds to the same
number of perfect matchings. Therefore, by selecting a random perfect matching in $H$ with the
uniform distribution, all equitable colorings have the same probability.

We established these two models since they simulate the worst situation in all the possible
colorings admitted in Theorem~\ref{thm:given}: the probability for a matching of being
rainbow only depends on the size of $|\M|$, and this set has its largest cardinality when there are
few colors with a maximum number of occurrences. This means that we have $s=nk$ colors with $n/k$
occurrences each.
Observe that in both models the expected size of each color class is also $n/k$, and in this sense,
they are contiguous to the hypothesis of Theorem~\ref{thm:given}. One can draw an analogy
between the $\URM$ and the Erd\H os-R\'enyi model $G(n,p)$ for random graphs, and also between the
$\RRM$ and the regular random graph $G(n,d)$.

\begin{proofof}[Proof of Theorem~\ref{thm:random}]

In the $\URM$ it is easy to compute the probability of having the rainbow property for a matching.
Let $M$ be a random perfect  matching, and $K_{n,n}$ provided with a random edge coloring with
$s\ge n$ colors. For the indicator variable $X_M$ that $M$ is rainbow we have:
\begin{eqnarray}
\Pr(X_M=1) & = & \frac{s}{s}\cdot\frac{s-1}{s}\cdot\frac{s-2}{s}\cdot\,\dots\,\cdot\frac{s-(n-1)}{s}
\nonumber\\
&=& \prod_{i=0}^{n-1} \left(1-\frac{i}{s}\right).\label{eq:URM}
\end{eqnarray}

For $s=n$ we can get directly from \eqref{eq:URM}
$$
\Pr(X_M=1)=\frac{n!}{n^n}\sim e^{-2\mu}.
$$
Assume $s>n$.
We have, for $0<x<1$,
\begin{equation}
\label{eq:approx}
	(1-x) = \exp\left\{ \log{(1-x)} \right\} %=\exp\left\{ -\sum_{r\ge 1}\frac{x^r}{r} \right\},
\end{equation}
%and
%$$
%\sum_{i=0}^n i^r \sim \frac{n^{r+1}}{r+1}.
%$$
Therefore,
\begin{eqnarray*}
\Pr(X_M=1) & = & \prod_{i=1}^{n-1} \exp\left\{\log{\left(1-\frac{i}{s}\right)}\right\} \\
& = & \exp\left\{ \sum_{i=1}^{n-1} \log{\left(1-\frac{i}{s}\right)}  \right\} \\
& \sim & \exp\left\{ \int_0^n \log{\left(1-\frac{x}{s}\right)} dx \right\}.
\end{eqnarray*}
We use
$$
\int_0^t \log\left(1-x\right)dx=(t-1)\log\left(1-t\right)-t
$$
By writing $k=s/n$
\begin{eqnarray*}
\Pr(X_M=1) & = & \exp\left\{ \left((k-1)\log{\left(\frac{k}{k-1}\right)}-1\right)n\right\}\\
 & \sim & \exp\left\{ - 2k\left( 1 - (k-1)\log \left(\frac{k}{k-1}\right)\right) \mu\right\}.
\end{eqnarray*}
since $\mu\sim\tfrac{n}{2k}$.

It must be stressed that this result is consistent with the ones in Theorem \ref{thm:given}. When $k=1$ we
have $\Pr(X_M=1)= e^{-2\mu}$, while $\lim_{k\rightarrow \infty} \Pr(X_M=1)= e^{-\mu}$.
Observe that, in this case, $\E (|\M|)$ is not exactly the same as for a given coloring. This is due
to the variance on the number of occurrences of each color, but does not have a significant
importance.

To study the property that a random selected matching is rainbow in the $\RRM$ we express the
equitable edge colorings through permutations $\sigma\in Sym (n^2)$.
Then, probability for a matching $M$ of being rainbow is,

\begin{eqnarray*}
\Pr(X_M=1) & = &
\frac{n^2}{n^2}\cdot\frac{n^2-\tfrac{n^2}{s}}{n^2-1}\cdot\frac{n^2-2\tfrac{n^2}{s}}{n^2-2}
\cdot\,\dots\,\cdot\frac {
n^2-(n-1)\tfrac{n^2}{s}}{n^2-(n-1)} \\
&=&\prod_{i=0}^{n-1} \left(1-\frac{i(n^2-s)}{s(n^2-i)}\right)\\
&=&  \exp \left\{\sum_{i=0}^{n-1} \log\left(1-\frac{i(n^2-s)}{s(n^2-i)}\right) \right\} \quad
\mbox{(by~\eqref{eq:approx})}\\
&\sim&  \exp \left\{\int_{0}^{n} \log\left(1-\frac{x(n^2-s)}{s(n^2-x)}\right) dx \right\}.
\end{eqnarray*}
If $s=n$ we have
$$
\int_{0}^{n} \log\left(1-\frac{x(n-1)}{(n^2-x)}\right) dx = -n(n-1)\log{\left(\frac{n}{n-1}\right)},
$$
which, by using the Taylor expansion of the logarithm, gives
$$
\Pr(X_M=1)  = e^{-(1+o(1))2\mu}
$$
In the case where $s>n$, and using $k=s/n$, we have
\begin{eqnarray*}
\int_{0}^{n} \log\left(1-\frac{x(n^2-s)}{s(n^2-x)}\right) dx &= &
\left((k-1)\log{\left(\frac{k}{k-1}\right)}-(n-k)\log{\left(\frac{n}{n-1}\right)}\right) n\\
&= & \left((k-1)\log{\left(\frac{k}{k-1}\right)} -1 +o(1)\right)n.
\end{eqnarray*}
Hence
\begin{eqnarray*}
\Pr(X_M=1) & = & \exp\left\{ - 2k\left(1- (k-1)\log \left(\frac{k}{k-1}\right) + o(1)\right)
\mu\right\}.
\end{eqnarray*}

\end{proofof}
Note that, for both models of random edge colorings, the probability that a fixed perfect
matching is rainbow the same (up to a $o(1)$ term).
Thus, in spite of being different models, the probability of having a rainbow matching is
similar.

In general it is not true that $\Pr(X_M=1)=e^{-(1+o(1))\mu}$ but, if $k=\omega(1)$, then
$\Pr(X_M=1)\rightarrow e^{-\mu}$ for both models since
$$
2k\left(1- (k-1)\log \left(\frac{k}{k-1}\right) \right) =
1+O\left(\frac{1}{k}\right)
$$
This is natural since, when $k$ is large, the number of bad events decreases and the model  behaves
like in the case they were independent.

Observe that for the two random models we obtain the exact asymptotic  value of the probability, while bounds provided by Theorem~\ref{thm:given} (when the size $|\M|$ of the set of bad events is maximum) are not sharp, although consistent with the values for the random models.  Since the result proven for fixed colorings does only depend on the size of $\M$, the probability for the random model $\RRM$ should be exactly the same.

%%%%%%%%%%%%%%%%%%%%%%%%%%%%%%%%%%%%%%%%%%%%%%%%%%%%%%%%%%%%%%%%%%%%%%%%%%%%%%%%%%%%%%%%%%%%%%%%%
\section{Existence of rainbow matchings}
\label{ssc:whp}
%%%%%%%%%%%%%%%%%%%%%%%%%%%%%%%%%%%%%%%%%%%%%%%%%%%%%%%%%%%%%%%%%%%%%%%%%%%%%%%%%%%%%%%%%%%%%%%%%

The aim of this Section is to prove that $\whp$ there exists a rainbow matching for a given
random coloring of $E(K_{n,n})$ with $s\ge n$ colors. We only consider the $\URM$, but the results can be adapted to the $\RRM$. The number of rainbow matchings is counted by $X=\sum X_M$, which, according to Theorem \ref{thm:random}, has expected value
\begin{equation}
\label{eq:expected}
\mathbb{E}(X)= n! \Pr(X_M=1)\sim n! \exp\left\{ - 2k\left( 1 - (k-1)\log \left(\frac{k}{k-1}\right)\right) \mu\right\}.
\end{equation}
In order to have a rainbow matching we just need that $X\neq 0$. 

Given two perfect matchings $M$ and $N$, the events that they are rainbow are positively correlated,
\begin{equation}
\Pr(X_M=1\mid X_N=1)\geq \Pr(X_M=1).
\end{equation}

\begin{proofof}[Proof of Theorem~\ref{thm:whp}]

To show that there exists some rainbow matching $\whp$ we will use the second moment method.
Let $X$ be a random variable with expected value $\mu$ and variance $\sigma^2$. Then, the Chebyshev
inequality asserts that
\begin{equation}\label{eq:cheby}
 \Pr(|X-\mu|> \alpha \sigma) \leq \frac{1}{\alpha^2}
\end{equation}

In particular if $\alpha=\tfrac{\mu}{\sigma}$,
\begin{equation}\label{eq:cheby2}
 \Pr(X=0)\leq \Pr(|X-\mu|>\mu) \leq \frac{\sigma^2}{\mu^2}
\end{equation}

Observe that $X=0$ is equivalent to the non existence of any rainbow matching. Therefore, we need to
compute $\sigma^2(X)$ and show that it is asymptotically smaller than $\E(X)^2$. Note that
$$
\mathbb{E}(X^2) = \sum_{M,N} \mathbb{E}(X_MX_N)
$$

Let $M$ and $N$ two fixed matchings, then
$$
\mathbb{E}(X_MX_N)=\Pr(X_M)\Pr(X_{N\setminus M})
$$

Given a fixed matching $M$ and a fixed intersection size $t$, we claim there are at most
$e^{-1}\binom{n}{t}(n-t)!$ matchings $N$, such that $|M\cap N|=t$. There are $\binom{n}{t}$ ways of
choosing which edges will be shared and once this edges have been fixed, at most $e^{-1}(n-t)!$ ways
of completing the matching. Suppose that $\tau=\sigma_N\mid_{N\setminus M} \in \S_{n-t}$ is the
permutation for extending the matching in the disjoint part.
Since $N$ has intersection exactly $t$ with $M$, not any permutation is valid. We can
assume $\wl$ that $M$ is given by $\sigma_M=Id$, and therefore $\tau$ must be a derangement.
Classical results state that the proportion of derangements in permutations of any length is at
most $e^{-1}$. This concludes our claim.

Hence,
$$
\E(X^2)= e^{-1}n! \sum_{t=0}^{n} \binom{n}{t}(n-t)! \Pr(X_M)\Pr(X_{N\setminus M})
$$
% .
%
% To show that there exists some rainbow matching $\whp$ we will use Cantelli's inequality,
% which is a one--tailed Chebyshev inequality. For any random variable $X$ $\mu=\E(X)$ and
% $\sigma$ the standard deviation,
% $$
% \Pr(\mu-X\geq\lambda\sigma)\leq\frac{1}{1+\lambda^2}
% $$

%
% Therefore,
% $$
% \Pr(\nexists M\mbox{ rainbow}) = \Pr(X=0) = \Pr(\mu-X \geq \mu) \leq
% \frac{1}{1+\tfrac{\mu^2}{\sigma^2}}
% $$

Since $\sigma^2=\E(X^2)-\E(X)^2$,
\begin{eqnarray*}
\frac{\sigma^2(X)}{\E(X)^2} &=& \frac{e^{-1}n! \sum_{t=0}^{n} \binom{n}{t}(n-t)!
\Pr(X_M)\Pr(X_{N\setminus M})}{(n! \Pr(X_M))^2} -1\\
& = & e^{-1}\sum_{t=0}^{n}\frac{1}{t!}\frac{\Pr(X_{N\setminus M})}{\Pr(X_M)}-1\\
%& \sim & e^{-1}\sum_{t=0}^{n}\frac{1}{t!}\left(e^{\tfrac{n}{2s}}\right)^t-1\\
%& \geq & e^{e^{\tfrac{n}{2s}}-1}-1\\
\end{eqnarray*}

Given $X_M$, we know that the edges of $M\cap N$ are rainbow. In the remaining $n-t$ edges to
color, we must avoid the $t$ colors that appear in $M\cap N$
$$
\Pr(X_N\mid X_M) = \prod_{i=t}^{n-1} \left(1-\frac{i}{s}\right)
$$

Then,
$$
f(s) = \sum_{t=0}^{n}\frac{1}{t!}\frac{\Pr(X_{N\setminus M})}{\Pr(X_M)} \sim
\sum_{t=0}^{n}\frac{1}{t!}e^{(1+O(1/k))\tfrac{t^2}{2s}} \sim
\sum_{t=0}^{\infty}\frac{1}{t!}e^{(1+O(1/k))\tfrac{t^2}
{2s}}$$

If the number of colors $s=\omega(1)$, then $f(s)\rightarrow e$. Observe that $s\geq n$. Otherwise,
$\Pr(X_M)=0$ in the Equation~\eqref{eq:URM}.

Hence $\tfrac{\sigma^2}{\mu^2}\rightarrow 0$ and the theorem holds.
\end{proofof}

Actually,
$$
f(s)=\frac{1}{s}+O(s^{-2})
$$

and Equation~\eqref{eq:cheby2} also provides an upper bound estimation for the probability $p$ that
a random coloring has no rainbow matchings of the type
$$
p\leq (1+o(1))\frac{1}{n} 
$$
Observe that the proportion of Latin squares among the set of square matrices with $n$ symbols is
of the order of $e^{-n^2}$ (see e.g.~\cite{vw2001}), so that this estimation falls short to prove
an asymptotic version of the original conjecture of Ryser.

\section{Open Problems}
\label{sec:open}

On the ennumeration of Rainbow matchings, it would be interesting to prove exact upper and lower
bounds for the case where the number of occurences of each color is at most $k$, with constant $k$.
Theorem~\ref{thm:given} provides exponential upper and lower bounds as long as $k\geq 12$, but both
are asymptotically equal if and only if $k=\omega(1)$. 

A related problem is to improve the lower bound $k\geq 4e$ given by Erd\H os
and Spencer~\cite[Theorem 2]{es1991} for the existence of rainbow matchings.

On the other hand, another really interesting problem is to prove that almost all latin squares
have a latin transversal, i.e. the asymptotic version of the Ryser conjecture. We have stablished a
probabilistic way to approach the problem.  Unfortunately, as far as we know, there
are no random models for latin squares. Some results on generating random latin squares can be found
in~\cite{mw1991,jm1996}. Nevertheless, there are some almost sure results on Latin
squares~(see e.g.\cite{mw1999,cgw2008}).

\bibliography{rainbow}
\bibliographystyle{amsplain}

\end{document}